\documentclass[12pt,reqno]{amsart}
\usepackage[T1]{fontenc}
\usepackage{lmodern}
\usepackage[margin=1in]{geometry}
\usepackage{microtype}
\usepackage{mathtools,amssymb}
\usepackage[plain]{algorithm}
\usepackage{caption}
\usepackage{algpseudocode}
\algrenewcommand{\algorithmicrequire}{\textbf{Input:}}
\algrenewcommand{\algorithmicensure}{\textbf{Output:}}
\usepackage{float}
\newtheorem*{theorem}{Theorem}
\newtheorem{lemma}{Lemma}
\newcommand{\NN}{\mathbb N}
\newcommand{\ZZ}{\mathbb Z}
\newcommand{\bits}{\mathrel{\triangleleft}}
\newcommand{\C}[1]{\mathcal C(#1)}

\usepackage{xcolor}
\usepackage[bookmarks=false]{hyperref} 
\definecolor{bluey}{rgb}{0,0,0.6}
\hypersetup{
  pdftitle={A 4AP-free permutation of the positive integers},
  pdfauthor={Boon Suan Ho},
  colorlinks,
  linkcolor=bluey,
  citecolor=bluey,
  urlcolor=bluey
}

\makeatletter
\def\thmhead@plain#1#2#3{%
  \thmname{#1}\thmnumber{\@ifnotempty{#1}{ }\@upn{#2}}%
  \thmnote{ \the\thm@notefont(#3)}}
\let\thmhead\thmhead@plain
\makeatother

\title{A 4AP-free permutation of the positive integers}
\author{Boon Suan Ho}
\email{hbs@u.nus.edu}
\date{}

\begin{document}

\begin{abstract}
We construct a permutation of the positive integers containing no four-term
arithmetic progression as a subsequence.
\end{abstract}

\maketitle

Davis, Entringer, Graham, and Simmons~\cite{DEGS} showed that every permutation
$a_1a_2\dots$ of the positive integers $\NN$ contains a monotone three-term
arithmetic progression (3AP); that is, indices $i<j<k$ such that
$(a_i,a_j,a_k)=(a_i,a_i+r,a_i+2r)$ for some integer $r\ne0$. They also
constructed a permutation of $\NN$ avoiding 5APs, and asked whether there exists
a 4AP-free permutation of $\NN$. Towards answering this question,
LeSaulnier--Vijay~\cite{LV} and Adenwalla~\cite{Adenwalla} constructed
permutations of $\NN$ avoiding 4APs with restricted common differences.
Adenwalla~\cite[Theorem~6]{Adenwalla} also proved that the supremal lower
density of subsets of $\NN$ admitting 4AP-free permutations is one. 

We resolve the question by constructing a 4AP-free permutation of the
positive integers.

\begin{theorem}
There exists a permutation of the positive integers containing no subsequence of
the form
$$
 a,\ a+r,\ a+2r,\ a+3r,
$$
where $r$ is a nonzero integer.
\end{theorem}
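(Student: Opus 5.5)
Our plan is to build the permutation greedily, in stages, while maintaining an invariant strong enough to make every stage possible. The paper has already defined an algorithm environment and a relation $\bits$ with a set $\C{\cdot}$, which suggests this shape: an explicit algorithm plus a combinatorial certificate that it never gets stuck. Concretely, we will build a sequence of finite 4AP-free sequences $\sigma_1\subset\sigma_2\subset\cdots$ of distinct positive integers. Each $\sigma_{n+1}$ extends $\sigma_n$ by appending new terms at the end, and $\sigma_{n+1}$ contains the least positive integer missing from $\sigma_n$. The limit is then a permutation of $\NN$, and it is 4AP-free because any four terms of the limit already lie in some finite $\sigma_n$.

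The key step is an extension lemma. Let $\sigma$ be a finite 4AP-free sequence satisfying a suitable invariant, and let $m\notin\sigma$. We want to append a block ending with $m$ so that the result is again 4AP-free and again satisfies the invariant. Appending $m$ alone can create a 4AP $a,a+r,a+2r,m$ with the first three terms in $\sigma$ in order. So we first append large "padding" numbers that destroy such threats, or we choose the invariant so that the threats are controlled. The invariant should bound the set of forbidden next values. For example, we could require that every 3AP subsequence of $\sigma$ whose fourth term $a+3r$ is not yet used satisfies $a+3r\notin S$ for some structured set $S$ containing all future small values. The set $\C{x}$ is likely this forbidden or completion set, and $\bits$ plausibly a binary-digit relation: $x\bits y$ meaning the binary digits of $x$ are contained in those of $y$. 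That points to a 2-adic or base-$2$ digit construction. We would place numbers by 2-adic valuation or by blocks of digits. Using a large power of two $2^K$ as an offset for the padding, the new elements lie in a "fresh" digit range, and any 3AP mixing old and new terms must then have its common difference constrained by digits.

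The steps, in order, are as follows.
\begin{enumerate}
\item Fix the invariant, most likely stated in terms of binary digits via $\bits$ and $\C{\cdot}$.
\item Show that a 4AP in the extended sequence must use the new terms in a restricted pattern. Do this by a case analysis on how many of the four terms are new (zero, one, two, three or four): old terms are small, new terms are of order $2^K$, and the arithmetic forces the common difference or the positions into a narrow form.
\item In each case, derive a contradiction from the digit structure.
\item Check that the invariant propagates.
\end{enumerate}

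The main obstacle is the case in which the first three terms $a,a+r,a+2r$ are old and the fourth term is the newly inserted small value $m$. Padding cannot help there, since $m$ is forced. This is exactly why the invariant must guarantee that $m$ is never the completion of an old 3AP. In other words, the set of completions $\{a+3r\}$ of 3APs in the current sequence must avoid all not-yet-placed small integers. The subtle part is choosing the padding so that it creates no new 3APs completing to small integers, while still using up old threats. We would first try padding elements of the form $2^K+$ (small), with $K$ growing, and verify via the digit relation that 3APs involving them have completions that are either huge or already used.
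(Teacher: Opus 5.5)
Your proposal has a genuine gap. It sets up greedy stages but never states the invariant (step~1) or proves that it propagates (step~4), and that is the whole content of the problem. Your instinct that the forced fourth term is the crux is right, but the invariant you sketch cannot work.

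\begin{itemize}
\item \textbf{Large unused completions are fatal too.} Suppose $a,a+r,a+2r$ occur in this order in $\sigma$ and $a+3r$ is an unused positive integer. Then the construction is already dead, however large $a+3r$ is, because every integer must eventually be appended after $a+2r$. So ``completions that are either huge or already used'' is not acceptable, and protecting only the small unused integers does not help.
\item \textbf{Padding adds obligations instead of removing them.} It cannot neutralize an existing threat, as you note yourself. Each padding term also creates new obligations among unused integers. For example, an old $x$ followed by a padding term $N$ means $3N-2x$ must be placed before $2N-x$; that placement creates further obligations, and so on.
\item \textbf{Step~2 rests on a false dichotomy.} It assumes ``old terms small, new terms of order $2^K$'', which fails because the new block must contain the small value $m$.
\item \textbf{The correct weak condition is still not enough.} Requiring every ordered 3AP in $\sigma$ to have its continuation already used or nonpositive does not by itself guarantee that any extension containing $m$ exists.
\item \textbf{Notation.} In the paper, $\bits$ is not digit containment, and $\C{\cdot}$ is an order, not a forbidden set.
\end{itemize}

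The missing idea is an invariant that controls all unused integers at once. Call $P$ \emph{safe} if its completion $\C{P}$ is 4AP-free. Here $\C{P}$ is $P$ followed by every unused integer in the total order $\bits$, which compares two integers at their lowest differing binary digit and puts the one with digit $1$ first. The order $\bits$ has these properties:
\begin{itemize}
\item it is 3AP-free;
\item it puts all odds before all evens;
\item it is self-similar: $2x\bits 2y\iff x\bits y\iff 2x+1\bits 2y+1$;
\item it satisfies $a\bits b\iff c\bits d$ for four consecutive AP terms $a,b,c,d$.
\end{itemize}
Hence any finite set listed in reverse $\bits$-order is safe.

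The extension lemma is then proved by induction on the largest entry of $P$. Split $P$ by parity; the two rescaled halves are safe by self-similarity, so extend each recursively. Output $Q=P\,O\,E$, with the new odd block $O$ placed before the new even block $E$. The odd recursive call is given the extra targets $0,1,\dots,h-1$, where $h=\max E$, so every odd integer below $2h$ lies in $P$ or $O$.

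Same-parity 4APs are excluded by the recursive calls. For a mixed-parity 4AP $a,b,c,d$, one shows that $c,d$ are new, and also $b$ when $c$ is odd. Then $d=2c-b\le 2c$ (if $c$ is even) or $c=2b-a\le 2b$ (if $c$ is odd) forces the odd term to precede the even one in $\C{Q}$, a contradiction.
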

We construct the permutation on $\NN_0=\{0,1,2,\ldots\}$; we can then add one to
the resulting permutation to obtain the desired permutation of $\NN$.

We will call a strict linear order $\prec$ on $\NN_0$ \emph{4AP-free}
if there are no $a\in\NN_0$ and $r\in\ZZ\setminus\{0\}$ with
$a,a+r,a+2r,a+3r\in\NN_0$ such that $a\prec a+r\prec a+2r\prec a+3r$.

Our proof relies on the following linear order $\bits$ of $\NN_0$. Given
distinct nonnegative integers $m=(\dots m_2m_1m_0)_2$ and $n=(\dots
n_2n_1n_0)_2$ expanded in binary, if $k$ is the smallest index for which $m_k\ne
n_k$, we set $m\bits n$ iff $m_k>n_k$. For example, $\bits$ restricted to
$\{0,\ldots,7\}$ is
$$
 7\bits3\bits5\bits1\bits6\bits2\bits4\bits0,
$$
the reverse colexicographic order on $3$-bit strings.

Note that all odd numbers precede all even numbers under $\bits$. Moreover,
after restricting $\bits$ to either parity and rescaling by $2x+1\mapsto x$ or
$2x\mapsto x$, we recover the same order $\bits$; in other words, $2x\bits 2y$ iff $x\bits y$ iff $2x+1\bits 2y+1$. This self-similarity will
allow us to treat odd and even parts recursively later in our proof. Finally,
note that $0$ is the greatest element of $\bits$.

The order $\bits$ is the dual of the restriction to $\NN_0$ of the binary
chaotic order of Ardal--Brown--Jungi\'c \cite[Definition~2.2 and \S5,
Remark~2]{ABJ}. Its 3AP-freeness follows from~\cite[Theorem~2.2]{ABJ}, and the
comparison identity~\eqref{eq:pairs} given below is a special case of
\cite[Lemma~2.5]{HS}; both of these properties are recorded in
\cite[Lemma~2.2]{Geneson}. We now give elementary proofs for completeness.

If an arithmetic progression has nonzero difference $r=2^kq$ with $q$ odd, its
terms agree at all bits below the $k$th, and alternate at the $k$th bit. Thus
three consecutive terms of an arithmetic progression have $k$th bits $0,1,0$ or
$1,0,1$. Since all lower bits agree, $k$ is the smallest index at which
consecutive terms differ. Hence the middle term either precedes both endpoints
in $\bits$, or follows both endpoints. Therefore neither $\bits$ nor its dual
contains a 3AP.

Similarly, if $a,b,c,d$ are four consecutive terms of an arithmetic progression,
their $k$th bits are $0,1,0,1$ or $1,0,1,0$. Thus
\begin{equation}\label{eq:pairs}
 a\bits b\quad\Longleftrightarrow\quad c\bits d.
\end{equation}

For a finite word $P$ of distinct nonnegative integers, let $\C P$ be the order
obtained by putting $P$ first and then ordering the remaining integers
$\NN_0\setminus P$ by $\bits$. We call $\C P$ the \emph{completion} of $P$, and we say that $P$ is \emph{safe} if $\C P$ is 4AP-free.
Starting from the empty word, we will use the extension lemma below to construct
nested safe words containing successively prescribed finite sets. Taking the
prescribed sets to be $\{0\},\{1\},\{2\},\ldots$ will then yield the desired
permutation.

The following lemma uses an argument analogous to that in~\cite[proof of
Lemma~5.6]{Geneson}.

\begin{lemma}\label{lem:base}
Every finite set, listed in reverse $\bits$-order, is a safe word.
\end{lemma}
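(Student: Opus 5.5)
The plan is a case analysis on how many terms of a putative 4AP lie in the finite set. Let $S$ be the finite set, let $P$ be $S$ listed in reverse $\bits$-order, and write $\prec$ for $\C P$. Then $\prec$ is described as follows:
\begin{itemize}
\item every element of $S$ precedes every element of $\NN_0\setminus S$;
\item on $S$, $x\prec y$ iff $y\bits x$, so $\prec$ is the dual of $\bits$ there;
\item on $\NN_0\setminus S$, $\prec$ agrees with $\bits$.
\end{itemize}
Suppose for contradiction that $a,b,c,d$ are consecutive terms of an arithmetic progression with nonzero difference and $a\prec b\prec c\prec d$. Since $S$ is an initial segment of $\prec$, there is some $j\in\{0,1,2,3,4\}$ such that exactly the first $j$ of $a,b,c,d$ lie in $S$ and the remaining $4-j$ lie outside $S$.

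I will dispose of each value of $j$ using the two facts already proved: neither $\bits$ nor its dual contains a 3AP, and the pair identity~\eqref{eq:pairs}.
\begin{itemize}
\item If $j\ge 3$, then $a\prec b\prec c$ all lie in $S$. This gives $c\bits b\bits a$, so $a,b,c$ is a 3AP that is increasing in the dual of $\bits$, which is impossible.
\item If $j\le 1$, then $b,c,d$ all lie outside $S$. This gives $b\bits c\bits d$, a 3AP increasing in $\bits$, which is again impossible.
\end{itemize}
These cases cover $j=0,1,3,4$.

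The only case that uses the four-term structure is $j=2$. Here $a,b\in S$ with $a\prec b$, so $b\bits a$. Also $c,d\notin S$ with $c\prec d$, so $c\bits d$. Now apply~\eqref{eq:pairs} to the four consecutive terms $a,b,c,d$: from $c\bits d$ we get $a\bits b$. This contradicts $b\bits a$, since $\bits$ is a strict linear order.

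The identity~\eqref{eq:pairs} holds for four consecutive terms in increasing index order regardless of the sign of $r$. The $\prec$-order of the terms coincides with their order in the progression, so no relabelling is needed. Hence $\C P$ is 4AP-free, and $P$ is safe.

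I do not expect a real obstacle. The one point needing care is the middle case $j=2$, where 3AP-freeness alone does not suffice. There the reversal on $S$ is exactly what makes~\eqref{eq:pairs} produce the contradiction.
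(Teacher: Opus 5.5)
Your proposal is correct and matches the paper's proof: both observe that the terms lying in $P$ form an initial segment of $a,b,c,d$, rule out $j\le1$ and $j\ge3$ by 3AP-freeness of $\bits$ and its dual, and settle $j=2$ by confronting $b\bits a$ with $c\bits d$ via~\eqref{eq:pairs}.
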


\begin{proof}
Let $S$ be the finite set and let $P$ denote its elements in reverse
$\bits$-order. Recall that $\C P$ consists of $P$, followed by all integers
outside $S$ in $\bits$-order.

Suppose for contradiction that a 4AP $a,b,c,d$ occurs in this order in $\C P$.
Since every element of $P$ precedes every element outside $P$, the terms of the
progression which lie in $P$ form an initial segment of $a,b,c,d$.

If at most one of the four terms lies in $P$, then $b,c,d$ all lie in the
$\bits$-ordered part, contradicting the fact that $\bits$ is 3AP-free. If at
least three lie in $P$, then $a,b,c$ lie in the reverse $\bits$-ordered part,
which is also 3AP-free.

Thus necessarily $a,b\in P$ and $c,d\notin P$. Since $P$ is ordered in reverse
$\bits$-order, $b\bits a$, whereas the remaining integers are ordered by
$\bits$, so $c\bits d$. But this contradicts~\eqref{eq:pairs}.
\end{proof}

The extension lemma below is the main step of our construction. Its proof uses
the self-similarity of $\bits$: we treat the even and odd parts recursively, at
each stage determining which new even numbers are needed, and then placing
sufficiently many odd numbers before them to prevent 4APs from forming.

\begin{lemma}[Extension]\label{lem:extend}
Every safe finite word $P$ is an initial segment of a safe finite word
containing any prescribed finite set $T\subset\NN_0$.
\end{lemma}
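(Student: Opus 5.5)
The plan is to induct on $\max T$, using the self-similarity of $\bits$ to pass to the even part. Two preliminary observations drive the argument.

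First, appending to $P$ any initial segment of the $\bits$-order on $\NN_0\setminus P$ leaves $\C P$ unchanged, so it preserves safety. Since all odd numbers precede all even numbers under $\bits$, we may therefore append any finite $\bits$-initial segment $O$ of the remaining odd numbers for free. This handles the odd elements of $T$, and more importantly lets us make every odd number below any chosen bound $M$ appear early.

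Second, let $P_e$ be the subsequence of even terms of $P$, halved. The restriction of $\C P$ to the evens, halved, is exactly $\C{P_e}$, because $2x\bits 2y$ iff $x\bits y$. A 4AP of even numbers halves to a 4AP. Hence $P_e$ is safe. Similarly, any 4AP with even difference has all terms of one parity, so it lives entirely in the odd part or in the even part.

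The construction goes as follows. Let $T_e=\{t/2: t\in T \text{ even}\}$, whose maximum is smaller than $\max T$ unless $\max T=0$. In that degenerate case we can simply use the observation that appending $0$ at the end of a long odd block is handled directly. By induction, $P_e$ extends to a safe word $P_eR$ containing $T_e$. Then choose $M$ much larger than every element of $P$, $T$ and $2R$; something like $M>3\max(P\cup T\cup 2R)$ plus a margin. Let $O$ be the $\bits$-initial segment of the remaining odd numbers containing all odd numbers below $M$. The candidate word is $P\,O\,2R$. Its completion lists, in order: $P$, then $O$, then $2R$, then the remaining odds in $\bits$-order, then the remaining evens in $\bits$-order.

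We then check 4AP-freeness by parity of the difference.
\begin{itemize}
\item Odd-only 4APs: the odd part of the completion is the same as in $\C P$, so these are excluded.
\item Even-only 4APs: the even part, halved, is $\C{P_eR}$, which is safe.
\item Mixed 4APs: the difference $r$ is odd, so the positions alternate odd, even, odd, even (or the reverse) in the order $a\prec b\prec c\prec d$.
\end{itemize}

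The main obstacle is the mixed case, since the new evens $2R$ now sit before infinitely many odd numbers. If all four terms are at most $M$, then all odd terms lie in $P\cup O$. Let $j$ be the first term that lies in $O$ or later. If no term involves $2R$ or $O$ in a way that changes relative order, the progression already occurred in $\C P$. Otherwise some even term $e\in 2R$ is followed in the progression by an odd term. That odd term must then lie in the remaining odds, so it exceeds $M$. This forces $|r|$ to be large relative to $\max(P\cup 2R)$, so the other even term is also large and lies in the remaining evens at the very end. Such a progression therefore has the form \emph{early, early, remaining-odd, remaining-even}, or a pattern where three terms lie in a $\bits$-ordered tail. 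We kill the second pattern with 3AP-freeness of $\bits$. For the first, we use \eqref{eq:pairs}: the last two terms are in $\bits$-order, so the first two must satisfy the same comparison. We then need to show that the two early terms (both at most $\max(P\cup 2R)$ yet differing by a huge $r$) cannot both be small, which is where the choice of $M$ enters. Making this case analysis airtight, in particular pinning down which patterns of positions among $P,O,2R$ and the tails are possible, is the step I expect to require the most care.
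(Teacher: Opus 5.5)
\textbf{There is a genuine gap: the block of odd numbers you append is not free.} Your first observation is true but vacuous. The only finite initial segment of $\bits$ on $\NN_0\setminus P$ is the empty one, because $\bits$ has no least element. Indeed $n+2^K\bits n$ whenever $2^K>n$, so every integer has infinitely many $\bits$-predecessors, and these are odd when $n$ is odd. Hence there is no finite block $O$ containing the odd numbers below $M$ that can be appended without changing $\C P$.

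You use this false step twice: to cover the odd elements of $T$, and to claim that the odd part of the completion is the same as in $\C P$. Appending such a block really can create 4APs. Take $P$ empty and $O=15,7,11,3,13,5,9,1$, the odd numbers below $16$ in $\bits$-order. The completion contains $3\prec 13\prec 23\prec 33$, since $3\bits 13$ inside the block and $23\bits 33$ in the tail. This is exactly the pattern that \eqref{eq:pairs} permits. Lemma~\ref{lem:base} escapes it only because its initial block is in reverse $\bits$-order, and that is not available after an arbitrary safe $P$.

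\textbf{The missing idea is to treat the odd part recursively, exactly like the even part.} Normalize the odd entries of $P$ by $2x+1\mapsto x$ to get a safe word $P_1$. Extend $P_1$ by the induction hypothesis with target set $\{(t-1)/2: t\in T\text{ odd}\}\cup\{x: x<h\}$, where $h$ is the largest new even entry. The new odd block $O$ then contains every odd integer $\le 2h$ not already in $P$. Moreover, the odd restriction of the completion of $P\,O\,E$ is the completion of a safe word, so odd-only 4APs are excluded.

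Because this target set can be far larger than $T$, you cannot induct on $\max T$. Induct instead on the largest entry $m$ of $P$, which drops to at most $\lfloor m/2\rfloor$ in both recursive calls regardless of the targets.

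This also replaces your unfinished mixed-parity analysis with a short argument. The key fact is that an odd $y\le 2x$, with $x$ even and $x,y\notin P$, precedes $x$. First, $c\notin P$, since otherwise the progression already lies in $\C P$. If $c$ is even, then $d=2c-b\le 2c$ precedes $c$, a contradiction. If $c$ is odd, first get $b\notin P$: otherwise $a,b\in P$, and the odd $c$ precedes the even $d$ in the tail of $\C P$, giving a 4AP there. Then $c=2b-a\le 2b$ precedes $b$, again a contradiction.
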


\begin{proof}
We induct on the largest entry $m$ of $P$, taking $m=0$ if $P$ is empty. For
each value of~$m$, we require the statement to hold for every finite target set
$T$.

If $m=0$, then $P$ is either empty or consists only of $0$. List the elements of
$P\cup T$ without repetition in reverse $\bits$-order. Since $0$ is the
greatest element of $\bits$, this word begins with $P$, and it is safe by
Lemma~\ref{lem:base}.

Now suppose $m>0$. Split $P$ according to parity. Let $P_0$ be the word obtained
from the even entries of $P$ by dividing them by $2$, and let $P_1$ be obtained
from the odd entries by applying $2x+1\mapsto x$. Both $P_0$ and $P_1$ are safe.
Indeed, restricting $\C P$ to the even integers leaves first the even entries
of $P$, followed by the unused even integers in $\bits$-order. After applying
$2x\mapsto x$, the first block becomes $P_0$, while the second becomes
$\NN_0\setminus P_0$ in $\bits$-order, by the self-similarity of $\bits$.
Thus the resulting order is exactly $\C{P_0}$.
Similarly, after restricting $\C P$ to the odd integers and rescaling by
$2x+1\mapsto x$, we obtain exactly $\C{P_1}$. Since a restriction of a 4AP-free
order is 4AP-free, both $P_0$ and $P_1$ are safe. Moreover, every entry of
either word is at most $\lfloor m/2\rfloor<m$, so the induction hypothesis
applies to both.

\newpage
\begin{algorithm}[H]
\small
\hrule height .8pt
\kern 3pt
\noindent$\textsc{Extend}(P,T)$\par
\kern 3pt
\hrule
\begin{algorithmic}[1]
\Require A safe finite word $P$ and a finite target set $T\subset\NN_0$.
\Ensure A safe word $Q$ having $P$ as a prefix and containing every element of $T$.
\State $m\gets\max P$, with $m=0$ if $P$ is empty
\If{$m=0$}
  \State \Return the elements of $P\cup T$ in reverse $\bits$-order
\EndIf
\Statex \textit{Normalize the even and odd subsequences of $P$.}
\State $P_0\gets$ the even entries of $P$, in their original order, divided by $2$
\State $P_1\gets$ the odd entries of $P$, in their original order, mapped by $2x+1\mapsto x$
\Statex \textit{Extend the even part to cover the even targets.}
\State $T_0\gets\{x\in\NN_0:2x\in T\}$
\State $R_0\gets\Call{Extend}{P_0,T_0}$
\State $E_0\gets$ the suffix of $R_0$ obtained by deleting its prefix $P_0$
\State $E\gets E_0$ rescaled by $x\mapsto2x$
\State $h\gets\max E$, with $h=0$ if $E$ is empty
\Statex \textit{Extend the odd part, forcing enough small odd numbers to precede $E$.}
\State $T_1\gets\{x\in\NN_0:2x+1\in T\}\cup\{x\in\NN_0:x<h\}$
\State $R_1\gets\Call{Extend}{P_1,T_1}$
\State $O_1\gets$ the suffix of $R_1$ obtained by deleting its prefix $P_1$
\State $O\gets O_1$ rescaled by $x\mapsto2x+1$
\Statex \textit{Splice the new odd entries before the new even entries.}
\State \Return $P\,O\,E$
\end{algorithmic}
\hrule
\caption[Safe extension procedure $\textsc{Extend}(P,T)$]{%
The recursive construction used in the proof. The two recursive calls are on
words whose largest entries are strictly smaller than $m$. The extra targets
$x<h$ in the odd call force every odd integer at most $2h$ which is not already
in $P$ to be placed before the new even block $E$; doing this prevents
mixed-parity 4APs from forming.}\label{alg:extend}
\end{algorithm}

First apply the induction hypothesis to $P_0$, asking that it contain all
numbers $t/2$ with $t\in T$ even. After rescaling by $x\mapsto2x$, the result
consists of the old even subsequence of $P$, followed by some new even entries;
let $E$ denote the suffix consisting of these new entries. Let $h$ be the
largest entry of $E$, or set $h=0$ if $E$ is empty.

We now deal with the odd entries. Apply the induction hypothesis to $P_1$,
asking that it contain both the normalized odd elements $(t-1)/2$ of $T$ and all
nonnegative integers less than $h$. After rescaling by $x\mapsto2x+1$, the
result consists of the old odd subsequence of $P$, followed by a word $O$ of new
odd entries. In particular, every odd integer at most $2h$ either already occurs
in $P$ or occurs in $O$.

We claim that
$$
Q=P\,O\,E
$$
is the desired extension. By construction, the entries of $O$ and $E$ are new,
and they have opposite parity, so $Q$ has no repeated entries. It begins with
$P$ and contains every element of~$T$. It remains only to prove that $Q$ is
safe.

By construction, after restricting $\C Q$ to the even integers and rescaling
by $2x\mapsto x$, we obtain the safe extension of $P_0$ constructed above,
followed by all remaining nonnegative integers in $\bits$-order. Since that
extension of $P_0$ is safe, this order is 4AP-free. Likewise, after restricting
$\C Q$ to the odd integers and rescaling by $2x+1\mapsto x$, we obtain the safe
extension of $P_1$ constructed above, followed by all remaining nonnegative
integers in $\bits$-order, and hence this order is also 4AP-free. Thus both
parity restrictions of $\C Q$ are 4AP-free. Furthermore, after the old prefix
$P$, the order $\C Q$ has the form
$$
O,\quad E,\quad \text{unused odd integers},\quad
\text{unused even integers},
$$
because all odd integers precede all even integers under $\bits$.

It follows that
\begin{equation}\label{eq:odd-even}
\begin{matrix}
\textsl{if $x$ and $y$ do not occur in $P$, $x$ is even, $y$ is odd,
and $y\le2x$,}\\\textsl{then $y$ precedes $x$ in $\C Q$.}
\end{matrix}
\end{equation}
Indeed, if $x\in E$, then $x\le h$, so every odd $y\le2x$ which does not already
occur in $P$ belongs to $O$. If $x$ is still unused, then every odd integer
occurs before it.

Suppose for contradiction that a 4AP
$$
a,b,c,d
$$
occurs in this order in $\C Q$. If its common difference is even, then all four
terms have the same parity, contradicting the 4AP-freeness of the corresponding
parity restriction. Hence the common difference is odd, so the parities of
$a,b,c,d$ alternate.

First observe that $c$ does not occur in $P$. Otherwise $a,b,c$ all occur in
$P$, since $P$ is an initial segment of $\C Q$. The term $d$ would also follow
$c$ in the old completion $\C P$, whether or not $d$ occurs in $P$. Thus
$a,b,c,d$ would already form a 4AP in $\C P$, contradicting the safety of $P$.
Hence neither $c$ nor $d$ occurs in $P$.

If $c$ is even, then $d$ is odd. Since $a,b,c,d$ is an arithmetic progression,
$$
d=2c-b\le2c.
$$
By~\eqref{eq:odd-even}, $d$ must precede $c$, a contradiction.

It remains to consider the case where $c$ is odd, so that $b$ is even. We claim
that $b$ does not occur in $P$. Suppose otherwise. Since $P$ is an initial
segment of $\C Q$ and $a$ precedes $b$, we then also have $a\in P$. Since we
already know that $c,d\notin P$, in the old completion $\C P$ the terms $a,b$
occur in the initial block $P$, while $c,d$ occur in the $\bits$-ordered tail.
Since $c$ is odd and $d$ is even, $c$ precedes $d$ in that tail. Hence $a,b,c,d$
occur in this order in $\C P$, contradicting the safety of $P$.

Thus $b,c$ do not occur in $P$. But $c=2b-a\le2b$, so~\eqref{eq:odd-even} says
that the odd number $c$ precedes the even number $b$ in $\C Q$, contradicting
the assumed order of the progression in which $b$ precedes $c$.
Therefore $\C Q$ is 4AP-free, and $Q$ is safe. This completes the induction.
\end{proof}

We can now construct the required permutation by successively forcing every
nonnegative integer into the growing safe prefix.

\begin{proof}[Proof of the theorem]
Let $P^{(0)}$ be the empty word. For each $n=0,1,2,\ldots$, apply
Lemma~\ref{lem:extend} to obtain a safe word $P^{(n+1)}$ which has $P^{(n)}$ as
an initial segment and contains $n$.

Thus each $P^{(n)}$ is an initial segment of $P^{(n+1)}$. No entry, once placed,
is ever moved, and every nonnegative integer eventually appears. The union of
these words is therefore a permutation of $\NN_0$.

If this permutation contained a 4AP, its four terms would all occur in some
finite stage~$P^{(n)}$. But every $P^{(n)}$ is safe, and hence in particular
contains no 4AP. This is a contradiction. Finally, adding $1$ to every entry
gives a 4AP-free permutation of the positive integers.
\end{proof}

\medskip
\noindent
The proof of Lemma~\ref{lem:extend} gives a deterministic recursive procedure:
at each recursive call, use exactly the target sets specified in the proof, and
in the base case list exactly the entries of $P\cup T$ in reverse $\bits$-order.
The recursion terminates because the largest entry of the input prefix strictly
decreases in each recursive call, regardless of the size of the target set.

Thus the construction is completely explicit and computable. For instance, if at
stage $n$ we apply this procedure with target set $T=\{n\}$, starting from the
empty word, we obtain a particular 4AP-free permutation of $\NN_0$ beginning
$$
0,1,3,2,11,7,15,9,5,6,4,2063,1039,527,2575,\ldots.
$$
Thus the corresponding permutation of the positive integers begins
$$
1,2,4,3,12,8,16,10,6,7,5,2064,1040,528,2576,\ldots.
$$
Given $j\ge0$, the $j$th term of the permutation can be computed by carrying out
the stages $T=\{0\},\{1\},\{2\},\ldots$ until the current finite prefix has
length greater than $j$; the $j$th entry of that prefix is then the $j$th entry
of the final permutation.

\section*{Declaration of AI usage}
GPT-6 Pro was used to find the construction and to produce an initial draft
of this paper. A formalization of the result is available from
\url{https://github.com/boonsuan/4ap}.

\end{document}